\documentclass[a4paper,11pt]{article}
\usepackage[margin=27mm,top=24mm,bottom=25mm,headheight=14pt]{geometry}
\usepackage[T1]{fontenc}
\usepackage[utf8]{inputenc}
\usepackage{lmodern}
\usepackage{microtype}
\usepackage{amsmath,amssymb,amsthm,mathtools}
\usepackage{booktabs,tabularx,array,enumitem}
\usepackage{caption,fancyhdr,needspace}
\usepackage{placeins}
\usepackage{tikz}
\usetikzlibrary{arrows.meta,positioning,calc}
\usepackage[hidelinks]{hyperref}
\hypersetup{
 pdftitle={The Tight Upper Bound on the Number of Distinct Squares in Circular Words},
 pdfauthor={Rikuya Hamai},
 pdfsubject={Research draft on distinct squares in circular words}}
\setlist[enumerate]{label=\textup{(\roman*)},leftmargin=2em,itemsep=3pt,topsep=5pt}
\newtheorem{theorem}{Theorem}[section]
\newtheorem{lemma}[theorem]{Lemma}
\newtheorem{corollary}[theorem]{Corollary}
\theoremstyle{definition}
\newtheorem{definition}[theorem]{Definition}
\theoremstyle{remark}

\newcommand{\word}[1]{\texttt{#1}}
\newcommand{\Sq}{\mathit{SQ}}
\newcommand{\Fac}{\mathit{Fac}}
\newcommand{\Lyn}{\mathit{Lyn}}
\newcommand{\Sc}{S_{\circ}(w)}
\newcommand{\Zset}{\mathcal Z}
\newcommand{\CS}{\mathit{CS}}
\newcommand{\outdeg}{\operatorname{outdeg}}

\tikzset{
 vertex/.style={draw,rounded corners=1.5pt,fill=white,minimum width=13mm,
                minimum height=8mm,font=\ttfamily\normalsize},
 edgea/.style={-{Stealth[length=2.2mm]},line width=0.8pt},
 edgeb/.style={-{Stealth[length=2.2mm]},line width=0.8pt,densely dashed},
 elabel/.style={font=\ttfamily\small,fill=white,inner sep=2pt}}

\title{\Large\bfseries
The Tight Upper Bound on the Number of\\[3pt]
Distinct Squares in Circular Words}
\author{Rikuya Hamai\\[3pt]
\small Department of Information Science and Technology\\
\small Kyushu University, Japan\\[2pt]
\small\href{mailto:hamai.rikuya.226@s.kyushu-u.ac.jp}{\texttt{hamai.rikuya.226@s.kyushu-u.ac.jp}}}
\date{}

\begin{document}
\maketitle
\thispagestyle{plain}

\begin{abstract}
A square is a word \(xx\), where \(x\) is nonempty.
We show that a circular word of length \(n\) contains at most
\(\lfloor3n/2\rfloor\) distinct squares of length at most \(n\).
The proof combines known results relating squares to circuits in Rauzy
graphs.
The coefficient \(3/2\) agrees with the known lower bound.
\end{abstract}

\section{Introduction}

A \emph{square} is a word of the form \(xx\) for a nonempty word \(x\).
The number of distinct squares in a word has been studied extensively
in combinatorics on words. Brlek and Li~\cite{brlek-li} proved that
a finite word of length \(n\) containing \(\sigma\) distinct letters
has at most \(n-\sigma\) distinct nonempty square factors.
Their proof relates squares to circuits in Rauzy graphs.

We consider the corresponding problem for circular words.
For a word \(w\) of length \(n\), let \(\Sc\) denote the number of
distinct squares of length at most \(n\) obtained by reading \(w\)
cyclically. Occurrences may cross the boundary between the last and
first letters, and equal square words are counted only once.
Charalampopoulos et al.~\cite{circular} proved
\(\Sc\le\lceil1.8n\rceil\), gave a lower bound of
\(3n/2-O(\sqrt n)\) for infinitely many lengths, and conjectured
the upper bound \(3n/2\). Li and Song~\cite{li-song} subsequently
reported the bound \(5n/3\).

A nonempty word is \emph{primitive} if it is not an integer power of
a shorter word. We prove the following theorem.

\begin{theorem}
Every nonempty word $w$ of length $n$ satisfies
\[
S^\circ(w)\le\left\lfloor\frac{3n}{2}\right\rfloor.
\]
\end{theorem}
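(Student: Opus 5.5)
The plan is to reduce the circular problem to the Brlek--Li setting of Rauzy graphs and count squares as circuits. Write $w$ as $u^k$ with $u$ primitive of length $p$, so $n=kp$. A square $xx$ of length at most $n$ occurring cyclically in $w$ is a factor of the bi-infinite periodic word $u^{\mathbb Z}$; these are the factors of $ww$ of length at most $n$. For each $m\ge 0$ let $\Gamma_m$ be the Rauzy graph of order $m$ of the circular word: its vertices are the cyclic factors of length $m$ and its edges are the cyclic factors of length $m+1$. Following Brlek and Li, a square $xx$ with $|x|=\ell$ is identified with a closed walk of length $\ell$ in $\Gamma_m$ for suitable $m$. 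Concretely, take $m=\ell-1$: the factor $xx$ traces a closed walk of length $\ell$ through $\Gamma_{\ell-1}$ starting at the prefix of length $\ell-1$. Distinct squares with the same $\ell$ correspond to distinct rotations, that is, to distinct starting vertices, on circuits of length $\ell$.

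The counting proceeds by the cyclomatic number. Since $u^{\mathbb Z}$ is periodic, each $\Gamma_m$ is strongly connected. For $m\ge p$ it is a single cycle of length $p$. For $m<p$ its cyclomatic number is
\[
E_m-V_m+1=f(m+1)-f(m)+1,
\]
where $f$ is the cyclic factor complexity of $u$. The Brlek--Li style injection associates to each square of half-length $\ell$ an edge of $\Gamma_{\ell-1}$ outside a spanning tree, or a "new" special-factor event, and summing over $\ell$ telescopes to a bound of roughly $f(p)$ plus contributions from the large half-lengths. The squares fall into two families.
\begin{enumerate}
\item Squares whose half-length $\ell$ is a multiple of $p$. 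These are exactly $(\text{rotation of }u)^{2j}$ with $2jp\le n$. They contribute at most $p\lfloor k/2\rfloor$, since there are $p$ rotations for each admissible $j$.
\item Squares with $p\nmid\ell$. These satisfy $2\ell\le n$ and are "short relative to the period", so the Rauzy-graph counting applies. Here I would aim for the bound $n$ or less, combining the linear-word bound of Brlek--Li applied to a suitable linearization ($ww$ truncated, or $u$ extended by $\ell$ letters) with an accounting of the boundary.
\end{enumerate}

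The target inequality is $(\text{ii})\le n-\lfloor n/2\rfloor + \lfloor n/2\rfloor$ minus the family (i) contribution, giving $\lfloor 3n/2\rfloor$ in total. For $k\ge 2$ the periodic case should follow by reducing to $u$ itself. Squares with $p\nmid\ell$ have $\ell<$ something governed by $p$ only after reducing $\ell \bmod p$, so I would instead treat all $\ell$ uniformly in $\Gamma_{\ell-1}$, which stabilizes at the $p$-cycle.

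The main obstacle is the primitive case $k=1$, where one must show that the number of cyclic squares is at most $\lfloor 3n/2\rfloor$. Brlek--Li applied naively to the linear word $ww$ of length $2n$ gives only about $2n$, and the known $1.8n$ and $5n/3$ bounds show that the saving is delicate. The plan here is to split by half-length:
\begin{itemize}
\item[] for $\ell\le n/2$, bound the squares by circuits in $\Gamma_{\ell-1}$ via Brlek--Li's injection into right-special-factor extensions, which totals at most $n$ over all $\ell$, because $\sum_m (f(m+1)-f(m))\le n$ as $f$ is bounded by $n$;
\item[] for $n/2<\ell\le n/2$ there are none, since squares have length at most $n$.
\end{itemize}
The extra $n/2$ must come from showing that the spanning-tree circuits themselves yield at most one square per two lengths. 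I would try to prove, via the Fine--Wilf theorem, that consecutive half-lengths $\ell,\ell+1$ cannot both contribute "tree" squares at the same vertex. This pairing argument is the step I expect to require the real work.
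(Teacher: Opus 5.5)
There is a genuine gap. The primitive case carries the whole difficulty, and your argument there rests on two claims you do not prove; the first is false as stated.

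\textbf{The level-by-level injection fails.} You charge each square of half-length $\ell$ to a right-special extension of $\Gamma_{\ell-1}$, i.e.\ to $d_{\ell-1}$. But the squares $x'x'$, for $x'$ ranging over the rotations of a fixed primitive $x$ of length $\ell$, all trace the same circuit of $\Gamma_{\ell-1}$. So one level contributes at most one dimension to the cycle space for the whole class. Concretely, take $w=\word{aabaabaabb}$. The three squares $\word{aabaab}$, $\word{abaaba}$, $\word{baabaa}$ all give the circuit $\word{aa}\to\word{ab}\to\word{ba}\to\word{aa}$ of $\Gamma_W(2)$, while $d_2=1$ and the cyclomatic number is only $2$.

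\textbf{The pairing claim is both unproved and too weak.} The Fine--Wilf pairing of ``tree'' squares is exactly the step you leave open. As quantified, it cannot supply the missing $n/2$: there are fewer than $n/2$ admissible half-lengths, so one square per two of them gives only about $n/4$. Moreover, squares of half-lengths $\ell$ and $\ell+1$ live in different graphs $\Gamma_{\ell-1}$ and $\Gamma_\ell$, so ``the same vertex'' is not a meaningful comparison.

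\textbf{The missing idea} is to charge each root class to many levels, and to pin down exactly where the cyclomatic $+1$ costs anything. The paper groups squares by Lyndon root $z$ and uses the circuits $C_z(\ell)$ of length $p_z$ at every level $p_z\le\ell\le M_z$, with levels going up to $n-2$.
\begin{itemize}
\item Lemma~16 of Tomita--I gives $K_z\ge s_z$ and, using $t\le p_z$, the slack $K_z\ge 2s_z-p_z$.
\item Independence gives $b_\ell\le d_\ell+1$ at each level.
\item If $d_\ell=0$, then $b_\ell=0$, because $\Gamma_W(\ell)$ is the $n$-cycle while $p_z<n/2$.
\item Hence $b_\ell\le\frac32 d_\ell$ except when $d_\ell=1$ and $b_\ell=2$.
\item In that exceptional case, $p+q<n$ cannot be a period of $W$, so one of the two circuits is traversed twice in a row. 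This forces $M_z\ge\ell+p_z$.
\item So a root is selected at most $\max\{0,K_z-p_z\}\le 2(K_z-s_z)$ times. Halving those weights still leaves at least $s_z$ per root, and summing gives $\frac32(n-\sigma)$.
\end{itemize}

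\textbf{The nonprimitive step also needs repair.} Your reduction ``to $u$ itself'' loses the squares with $p\nmid|x|$. These satisfy $|x|<p$ by Fine--Wilf, but can have length up to $2p-2$, so they are not circular squares of $u$. This is fixable: apply Brlek--Li to a factor of $u^{\mathbb Z}$ of length $3p-1$, then add the $p(\lfloor k/2\rfloor-1)$ squares $(u')^{2j}$ with $j\ge2$. The paper simply cites Lemma~14 of Charalampopoulos et al.
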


The nonprimitive case follows from an existing result stated in
Lemma~\ref{lem:nonprimitive}. For the primitive case, we use the
correspondence between squares and circuits developed by Brlek and
Li~\cite{brlek-li} and Tomita and I~\cite{tomita-i}.

\section{Preliminaries}\label{sec:prelim}

\subsection{Words and circular factors}

Let \(\Sigma\) be a finite ordered alphabet. A \emph{word} is a finite
sequence of letters from \(\Sigma\). The length of a word \(u\) is
denoted by \(|u|\), and \(\varepsilon\) denotes the empty word.
For words \(x,y\), their concatenation is denoted by \(xy\), and
\(u^k\) denotes the concatenation of \(k\) copies of \(u\).
Positions are indexed from \(1\). We write \(u[i..j]\) for the
\emph{factor} consisting of positions \(i,\ldots,j\), with
\(u[i..j]=\varepsilon\) when \(i>j\).
A factor beginning at the first position is a \emph{prefix}.
A nonempty word \(u\) is primitive if \(u=x^k\) with \(k\ge1\)
implies \(k=1\); otherwise, it is \emph{nonprimitive}.

Words \(xy\) and \(yx\) are \emph{conjugate}. For a nonempty word \(u\),
its conjugacy class is
\[
 [u]=\{u[i..|u|]u[1..i-1]:1\le i\le|u|\}.
\]
Its elements are also called the \emph{rotations} of \(u\).
A \emph{circular word} is a conjugacy class \([w]\).
Fix a representative \(w\), and put \(n=|w|\).
Let \(\sigma\) be the number of distinct letters in \(w\), and let
\(W=w^{\mathbb Z}\) be its repetition in both directions, indexed so
that \(W[1..n]=w\) and \(W[i+n]=W[i]\) for every \(i\in\mathbb Z\).

For a finite or infinite word \(V\), let \(\Fac_V(m)\) be the set of
its distinct factors of length \(m\), with
\(\Fac_V(0)=\{\varepsilon\}\), and let \(\Sq_V\) be the set of its
distinct nonempty square factors. A factor of \([w]\) is a factor of
\(W\) of length at most \(n\). Thus
\begin{equation}\label{eq:squares}
 \Sq_w^\circ=\{xx\in\Sq_W:2|x|\le n\},\qquad
 \Sc=|\Sq_w^\circ|.
\end{equation}
These sets are independent of the representative of \([w]\).
Every factor of \(W\) of length \(m\le n\) occurs in \(ww\).

An integer \(p\), with \(1\le p\le|u|\), is a \emph{period} of a
nonempty finite word \(u\) if \(u[i]=u[i+p]\) for
\(1\le i\le|u|-p\). For a word indexed by \(\mathbb Z\), this equality
is required at every position. If \(w\) is primitive, then the least
positive period of \(W\) is \(n\). Indeed, if that period is \(h\),
write \(n=qh+r\) with \(0\le r<h\). Then
\(W[i+r]=W[i+n-qh]=W[i]\), so \(r=0\) by minimality of \(h\).
Thus \(h\) divides \(n\), and \(h<n\) would express \(w\) as an integer
power of a shorter word.
Here the least period is that of \(W\): a primitive finite word,
such as \(\word{ababa}\), may have a smaller period that does not divide
its length.

\begin{lemma}[Lemma~14 of~\cite{circular}]\label{lem:nonprimitive}
If \(w\) is nonprimitive and \(|w|=n\), then \(\Sc\le3n/2\).
\end{lemma}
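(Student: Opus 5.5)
The plan is to write $w=u^k$ with $u$ primitive, $k\ge2$, $m=|u|$, $n=km$, so that $W=u^{\mathbb Z}$ has least period $m$ (by the argument in Section~\ref{sec:prelim}, applied to $u$). I would then split $\Sq_w^\circ$ into \emph{long} squares $xx$ with $|x|\ge m$ and \emph{short} squares with $|x|<m$, and aim for at most $n/2$ long squares and at most $n$ short squares.

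\emph{Long squares.} Let $xx$ be a factor of $W$ with $|x|\ge m$. Then $xx$ has periods $|x|$ and $m$, and its length is $2|x|\ge|x|+m$. By the Fine--Wilf theorem, $d=\gcd(|x|,m)$ is also a period of $xx$. Because $|xx|\ge m$, the factor $xx$ contains a full rotation of $u$. That rotation would then have period $d$ with $d\mid m$. If $d<m$, this rotation, and hence $u$, would be a proper power, which is impossible. So $m$ divides $|x|$. Write $|x|=jm$ with $1\le j\le\lfloor k/2\rfloor$. For each such $j$, the square $xx$ is determined by the residue of its starting position modulo $m$, so there are at most $m$ choices. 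This gives at most $m\lfloor k/2\rfloor\le n/2$ long squares.

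\emph{Short squares.} Every square with $|x|<m$ has length at most $2m-2$. Hence it occurs in the finite word $v=u^2u[1..m-2]$, which has length $3m-2$. Brlek and Li~\cite{brlek-li} bound the distinct squares of $v$ by $3m-2-\sigma<3m$. When $k\ge3$ we have $3m\le n$, so the total is at most $n+n/2=3n/2$.

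The main obstacle is the case $k=2$, where $n=2m$. Here I must show that the short squares number at most $2m=n$, while the crude Brlek--Li bound only gives about $3m$. The long squares contribute at most $m$ in this case, namely the rotations of $uu$.

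My first attempt would use rightmost occurrences in a long window $W[1..L]$. By the Fraenkel--Simpson/Ilie lemma, at most two distinct squares have their rightmost occurrence starting at any given position. Since $W$ has period $m$, the rightmost occurrence of each short square starts in the last window of length $m$ before the point where it would be forced out by length. I would try to show that these starting positions, after reducing modulo $m$, give at most two squares per residue class. That would yield at most $2m$ short squares. If the residue bookkeeping fails, the fallback is to refine the count. I would attach each short square $xx$ to the position $i$ modulo $m$ of the rightmost occurrence in $v$ of the longer of the two competing squares, and use the primitivity of $u$ to exclude a third square at each residue.
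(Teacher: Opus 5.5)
\textbf{There is a gap in the case $k=2$.} Your count of the long squares (Fine--Wilf plus primitivity of the rotations of $u$) and your treatment of $k\ge3$ are correct. The case $k=2$, however, is not proved.

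The residue-class plan does not follow from the Fraenkel--Simpson lemma. That lemma bounds the number of squares whose rightmost occurrence starts at a given \emph{position}. In a long window $W[1..L]$, the rightmost occurrences of short squares of different lengths start anywhere in a range of about $3m$ positions, roughly $[L-3m+4,\,L-1]$. Reducing modulo $m$ therefore merges about three positions per residue class. Nothing you state prevents three or more squares from landing in the same class. The fallback, which attaches each square to the longer of two competing squares and invokes primitivity, is not yet an argument. As written, the bound $2m$ on the short squares when $k=2$ is unsupported.

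\textbf{The missing idea.} The split into long and short squares is what creates the obstacle. Since $W$ has period $m$, every factor of $W$ of length at most $n$ has an occurrence starting in $[1,m]$. Hence every such factor is a factor of the finite word $W[1..n+m-1]$. Applying Brlek--Li to this word of length $n+m-1$ gives
\[
\Sc\le n+m-1-\sigma\le 3n/2-1-\sigma,
\]
because $m=n/k\le n/2$.

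For $k=2$ this bound is $3m-1-\sigma$, and it covers the $m$ long squares and the short squares together. Bounding the short squares separately, through a window of length $3m-3$, costs an extra $m$. That extra $m$ is exactly why your split fails here. The same one-line observation handles every $k\ge2$ at once, so the Fine--Wilf analysis of long squares becomes unnecessary.

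For comparison, the paper gives no proof of this lemma; it simply cites Lemma~14 of Charalampopoulos et al.
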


Henceforth assume that \(w\) is primitive and \(n\ge2\).

\subsection{Rauzy graphs}

For a directed graph \(G\), write \(V(G)\) and \(E(G)\) for its vertex
and arc sets. An \emph{arc} is a directed edge. Distinct arcs with
the same endpoints and \emph{loops}, whose two endpoints coincide,
are allowed.
A \emph{walk} is a sequence of arcs in which each arc starts where
the preceding arc ends. Vertices and arcs may repeat, and the length
of a finite walk is its number of arcs. Infinite walks satisfy the
same adjacency condition.
A \emph{circuit} is a nonempty finite walk that returns to its initial
vertex. A circuit is \emph{elementary} if its vertices before the final
return are pairwise distinct. We identify circuits that differ only
in their starting point along the same cyclic arc sequence.
A graph is \emph{strongly connected} if every vertex is reachable
from every other vertex by a walk.
The \emph{outdegree} \(\outdeg(v)\) is the number of arcs leaving \(v\).

\begin{definition}[Rauzy graph]\label{def:graph}
For a finite or infinite word \(V\) and an integer \(\ell\ge0\),
the \emph{Rauzy graph} \(\Gamma_V(\ell)\) has vertex set \(\Fac_V(\ell)\)
and, for each \(u\in\Fac_V(\ell+1)\), one arc
\[
 u[1..\ell]\xrightarrow{\ u\ }u[2..\ell+1].
\]
\end{definition}

Reading a factor \(W[a..a+\ell+r-1]\), where \(r\ge0\), gives a walk
of length \(r\) whose successive vertices are
\(W[a+j..a+j+\ell-1]\), \(0\le j\le r\).
The converse need not hold: for \(W=(\word{aab})^{\mathbb Z}\),
the loop labelled \(\word{aa}\) in \(\Gamma_W(1)\) can be traversed
twice, although \(\word{aaa}\) is not a factor of \(W\).
We will use walks read from actual factors when proving the existence
of longer factors.

For \(\ell\ge0\), put
\begin{equation}\label{eq:d}
 C_\ell=|\Fac_W(\ell)|,\qquad
 d_\ell=C_{\ell+1}-C_\ell
       =|E(\Gamma_W(\ell))|-|V(\Gamma_W(\ell))|.
\end{equation}

\begin{lemma}\label{lem:basic}
The graphs \(\Gamma_W(\ell)\) are strongly connected and \(d_\ell\ge0\).
Moreover,
\begin{equation}\label{eq:telescoping}
 C_{n-1}=C_n=n,\qquad
 \sum_{\ell=1}^{n-2}d_\ell=n-\sigma.
\end{equation}
If \(d_\ell=0\), or if \(\ell\ge n-1\), then \(\Gamma_W(\ell)\)
is a single elementary circuit of length \(n\).
\end{lemma}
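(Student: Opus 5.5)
The plan is to work throughout with the fact, recorded above, that since \(w\) is primitive the least period of \(W\) is exactly \(n\). Everything then follows from two elementary counting facts and one structural observation about Rauzy graphs with \(d_\ell=0\).

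\textbf{Basic facts.} First, \(C_\ell\le n\) for every \(\ell\). Every factor of \(W\) of length \(\ell\) is \(W[i..i+\ell-1]\) for some \(i\), and this word depends only on \(i\bmod n\).

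Second, \(d_\ell\ge0\). Since \(W\) is bi-infinite, every factor of length \(\ell\) extends to the right. Hence the prefix map \(\Fac_W(\ell+1)\to\Fac_W(\ell)\) is surjective. Equivalently, every vertex of \(\Gamma_W(\ell)\) has outdegree at least \(1\), so \(|E|\ge|V|\). The same surjectivity shows that \(C_\ell\) is nondecreasing in \(\ell\).

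\textbf{Strong connectivity.} Given vertices \(u,v\), choose an occurrence of \(u\) at position \(a\) and an occurrence of \(v\) at some position \(b>a\). Such a \(b\) exists because occurrences recur with period \(n\). Reading the factor \(W[a..b+\ell-1]\) gives a walk from \(u\) to \(v\).

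\textbf{Key step: the case \(d_\ell=0\).} If \(d_\ell=0\), then \(|E|=|V|\) and every outdegree is at least \(1\), so every outdegree is exactly \(1\). Thus the graph is functional, and being strongly connected it is a single elementary circuit of length \(C_\ell\).

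Moreover, each letter \(W[i+\ell]\) is determined by the preceding factor \(W[i..i+\ell-1]\), via the unique outgoing arc. So the walk read along \(W\) goes around this circuit, and \(W\) has period \(C_\ell\). By minimality of the period \(n\), we get \(n\mid C_\ell\). Combined with \(C_\ell\le n\), this gives \(C_\ell=n\). This is the main point of the lemma, and the remaining claims reduce to it.

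\textbf{The identities \eqref{eq:telescoping}.} Suppose \(C_{n-1}<n\). By the key step, no \(\ell\le n-2\) can have \(d_\ell=0\): otherwise \(C_\ell=n\), and monotonicity would force \(C_{n-1}=n\). So \(C\) strictly increases at each of the \(n-2\) steps from \(1\) to \(n-1\), giving
\[
C_{n-1}\ge C_1+(n-2)=\sigma+n-2.
\]
Since \(n\ge2\) and \(w\) is primitive, \(w\) is not a power of a single letter, so \(\sigma\ge2\). Hence \(C_{n-1}\ge n\), a contradiction.

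Therefore \(C_{n-1}=n\). By monotonicity and \(C_n\le n\), also \(C_n=n\). Telescoping now gives
\[
\sum_{\ell=1}^{n-2}d_\ell=C_{n-1}-C_1=n-\sigma.
\]

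\textbf{The final claim.} For \(\ell\ge n-1\), monotonicity and the bound \(C_\ell\le n\) give \(C_\ell=C_{\ell+1}=n\), that is, \(d_\ell=0\). So this case reduces to the case \(d_\ell=0\). There the key step shows that \(\Gamma_W(\ell)\) is a single elementary circuit with \(C_\ell=n\) vertices, hence of length \(n\).
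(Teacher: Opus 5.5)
Your proof is correct. The first part matches the paper: strong connectivity from occurrences, $d_\ell\ge0$ from positive outdegree, and the key step that $d_\ell=0$ yields a single circuit whose length is a period of $W$ and hence equals $n$.

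You diverge genuinely in how you obtain $C_{n-1}=n$ and the case $\ell\ge n-1$.

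\textbf{The paper's route.} It proves $C_n=n$ from the distinctness of the rotations of $w$. It then gets $C_{n-1}=n$ by a letter-count argument: two rotations sharing a length-$(n-1)$ prefix must also share their last letter, since all rotations contain the same number of each letter. The case $\ell\ge n-1$ is handled directly: each vertex identifies a unique starting position modulo $n$, so its outgoing letter is determined and every outdegree is one.

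\textbf{Your route.} You use a Morse--Hedlund-type argument. Because $d_\ell=0$ forces $C_\ell=n$, the complexity must increase by at least one at every step while it is below $n$. Starting from $C_1=\sigma\ge2$, this forces $C_{n-1}\ge n$. Then $C_n=n$, and $d_\ell=0$ for all $\ell\ge n-1$, follow from monotonicity and $C_\ell\le n$. So the final claim reduces to the key step.

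\textbf{Comparison.} The paper's route is more direct for the specific identities and does not need the $d_\ell=0$ analysis to get them. Yours derives everything from the single structural fact $d_\ell=0\Rightarrow C_\ell=n$, and it yields more. It shows that $d_\ell=0$ holds exactly when $C_\ell=n$, and that $C_\ell=n$ already for $\ell\ge n-\sigma+1$.

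One small remark: your divisibility step $n\mid C_\ell$ is harmless but unnecessary. A positive period $C_\ell\le n$ of $W$ equals $n$ directly by minimality of the least period.
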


\begin{proof}
Consider the \(n\) consecutive starting positions of the circular word \(W\), taken in cyclic order. For each position, the length-\(\ell\) factor starting there corresponds to a vertex of \(\Gamma_W(\ell)\), and moving to the next position corresponds to traversing the arc induced by the corresponding length-\((\ell+1)\) factor. After \(n\) such moves, we return to the initial position, and hence obtain a closed walk in \(\Gamma_W(\ell)\). Since every cyclic occurrence of a length-\(\ell\) factor and every cyclic occurrence of a length-\((\ell+1)\) factor is encountered during this traversal, the walk visits every vertex and traverses every arc of \(\Gamma_W(\ell)\). Therefore, \(\Gamma_W(\ell)\) is strongly connected. Moreover, the walk has exactly \(n\) arc traversals, so the number \(C_\ell\) of arcs of \(\Gamma_W(\ell)\) satisfies $C_\ell \le n.$
Every vertex has positive outdegree, and therefore
\begin{equation}\label{eq:outdegrees}
 d_\ell=\sum_{v\in V(\Gamma_W(\ell))}(\outdeg(v)-1)\ge0.
\end{equation}

The length-\(n\) factors starting at positions \(1,\ldots,n\) are
the rotations of \(w\). If the rotations starting at \(i<j\) were
equal, \(W[k]=W[k+j-i]\) would hold for \(n\) consecutive positions
and hence for every \(k\), by \(n\)-periodicity. This contradicts
the least period \(n\), so \(C_n=n\).
If two rotations had equal length-\(n-1\) prefixes, their remaining
letters would also be equal, since rotations contain the same number
of each letter. This would give equal rotations. Thus \(C_{n-1}=n\),
and, as \(C_1=\sigma\),
\[
 \sum_{\ell=1}^{n-2}d_\ell=C_{n-1}-C_1=n-\sigma.
\]

If \(d_\ell=0\), equation~\eqref{eq:outdegrees} gives outdegree one
at every vertex. Strong connectivity implies that the graph is a
single elementary circuit. Let its length be \(h\le n\).
The walk read from \(W\) follows this circuit, so its arc labels and
their first letters repeat every \(h\) steps. Thus \(h\) is a period
of \(W\), and \(h=n\).
Now suppose that $\ell\ge n-1$. The length-$\ell$ factors starting at positions
$1,\ldots,n$ are pairwise distinct, since their length-$(n-1)$ prefixes are
pairwise distinct. Hence each vertex of $\Gamma_W(\ell)$ corresponds to a unique
starting position modulo $n$. Its following letter is therefore uniquely
determined, so every vertex has outdegree one. Since $\Gamma_W(\ell)$ is strongly
connected, it is a single elementary circuit. Moreover, it has exactly $n$
vertices, and hence its length is $n$.
\end{proof}

For a circuit \(C\) in a directed graph \(G\), define
\(\mu(C)\in\mathbb R^{E(G)}\) by letting its coordinate at an arc
\(e\) be the number of times \(C\) traverses \(e\).
Circuits are \emph{independent} if their corresponding vectors are
linearly independent. We use the following standard bound.

\begin{lemma}[Consequence of Theorem~7 of~\cite{tomita-i}]\label{lem:dimension}
In a finite strongly connected directed graph \(G\), the number of
independent circuits is at most \(|E(G)|-|V(G)|+1\).
For \(\Gamma_W(\ell)\), this bound is \(d_\ell+1\).
\end{lemma}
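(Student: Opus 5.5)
The bound $|E(G)|-|V(G)|+1$ is the cyclomatic number of the underlying multigraph. I would prove it as a dimension bound on the \emph{cycle space}. Let $Z\subseteq\mathbb R^{E(G)}$ be the space of real circulations, i.e.\ vectors $f$ with
\[
 \sum_{e\text{ enters }v}f(e)=\sum_{e\text{ leaves }v}f(e)\quad\text{for every }v\in V(G).
\]
Every circuit $C$ satisfies this conservation law, since each visit to a vertex along a closed walk enters once and leaves once. Hence $\mu(C)\in Z$. Any family of independent circuits therefore has at most $\dim Z$ members, and it suffices to show $\dim Z\le |E(G)|-|V(G)|+1$.

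To bound $\dim Z$, I write $Z=\ker B$, where $B\in\mathbb R^{V(G)\times E(G)}$ is the incidence matrix. The column of an arc $u\to v$ has $+1$ at $v$ and $-1$ at $u$, and a loop gives a zero column. By rank--nullity, $\dim Z=|E(G)|-\operatorname{rank}B$, so I need $\operatorname{rank}B\ge|V(G)|-1$.

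I would compute the left kernel. Suppose $y^{T}B=0$. Then $y(v)=y(u)$ for every arc $u\to v$. Since $G$ is strongly connected, and in fact weak connectivity suffices, $y$ is constant on $V(G)$. So the left kernel is one-dimensional, giving $\operatorname{rank}B=|V(G)|-1$ and $\dim Z=|E(G)|-|V(G)|+1$. Parallel arcs and loops cause no trouble: a loop adds a free coordinate to $Z$ and one arc to $E(G)$, consistent with the count.

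For $\Gamma_W(\ell)$, Lemma~\ref{lem:basic} gives strong connectivity, and by \eqref{eq:d}, $|E|-|V|=d_\ell$. So the bound is $d_\ell+1$.

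The only point needing care is that linear independence is over $\mathbb R$ and that the vectors $\mu(C)$ count traversals with multiplicity; both fit the circulation space directly. I do not expect a real obstacle, since this is the standard cycle-space dimension argument that Theorem~7 of~\cite{tomita-i} packages.
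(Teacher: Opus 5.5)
Your proof is correct. The paper gives no argument of its own and simply cites Theorem~7 of Tomita and I; your rank computation for the incidence matrix is the standard cycle-space proof behind that citation, and your specialization to \(\Gamma_W(\ell)\) via Lemma~\ref{lem:basic} and \eqref{eq:d} matches the paper, while making the lemma self-contained and showing that weak connectivity already suffices.
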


\section{Squares and circuits}\label{sec:circuits}

\subsection{Squares with a common root}

The \emph{primitive root} of a nonempty word \(u\) is the unique
primitive word \(x\) such that \(u=x^r\) for some integer \(r\ge1\).
A \emph{Lyndon word} is a primitive word that is lexicographically
smallest among its rotations. Here lexicographic order compares the
first differing letters; if one word is a prefix of the other, the
shorter word comes first.
Every square is uniquely written as \(x^{2r}\) with \(x\) primitive
and \(r\ge1\). Its \emph{Lyndon root} is the Lyndon word in \([x]\).
The primitive root need not be the first half of the square: for
\(\word{abababab}\), the first half is \(\word{abab}\) and the
primitive root is \(\word{ab}\).

For a finite or infinite word \(V\), let \(\Lyn_V\) be its set of
Lyndon factors. For \(z\in\Lyn_V\), define
\[
 \Sq_V(z)=\{x^{2r}\in\Sq_V:x\in[z],\ r\ge1\},\qquad
 \Sq_w^\circ(z)=\Sq_w^\circ\cap\Sq_W(z).
\]
For example, \(\word{aabaab},\word{abaaba},\word{baabaa}\) have
the same Lyndon root \(\word{aab}\).
Let \(\Zset\) be the set of roots with \(\Sq_w^\circ(z)\ne\varnothing\),
and put \(p_z=|z|\) and \(s_z=|\Sq_w^\circ(z)|\).
These classes partition \(\Sq_w^\circ\), and
\begin{equation}\label{eq:root-data}
 \Sc=\sum_{z\in\Zset}s_z,\qquad 2p_z<n\quad(z\in\Zset).
\end{equation}
Indeed, a square in the class of \(z\) has length at least \(2p_z\)
and at most \(n\). Equality \(2p_z=n\) would make a rotation of
\(w\) a square, contradicting primitivity.
All sums over \(z\) below are over \(\Zset\).

\subsection{Circuits associated with a root}

Let \(z\) be a Lyndon word of length \(p\), and put \(Z=z^{\mathbb Z}\).
For \(m\ge0\), write
\[
 [z]_m=\Fac_Z(m)=\{Z[i..i+m-1]:1\le i\le p\}.
\]
For \(\ell\ge p-1\), let \(C_z(\ell)\) be the circuit in
\(\Gamma_Z(\ell)\) with successive arc labels
\[
 Z[1..1+\ell],\ Z[2..2+\ell],\ \ldots,\ Z[p..p+\ell].
\]
The arcs are consecutive by Definition~\ref{def:graph}, and the last
returns to the first vertex by \(p\)-periodicity.
This circuit has length \(p\). If \(\ell\ge p\), it is elementary,
since the length-\(p\) prefixes of its vertices are the distinct
rotations of \(z\).
The circuit is present in \(\Gamma_V(\ell)\) exactly when
\([z]_{\ell+1}\subseteq\Fac_V(\ell+1)\). Define
\[
 \CS_V(z)=\{C_z(\ell):\ell\ge p-1,\
                  [z]_{\ell+1}\subseteq\Fac_V(\ell+1)\}.
\]
Here \(\ell\) is the length of the vertex labels, whereas
\(p\) is the number of arcs in the circuit.

We use the following results of Tomita and I~\cite{tomita-i}.

\begin{lemma}[Consequence of Lemmas~10 and~11 of~\cite{tomita-i}]\label{lem:prefix}
Let \(v\) be a finite word, \(z\in\Lyn_v\), and \(m\ge|z|\).
If \([z]_m\subseteq\Fac_v(m)\), then
\([z]_{m-1}\subseteq\Fac_v(m-1)\), and \(C_z(m-1)\) is present
in \(\Gamma_v(m-1)\).
\end{lemma}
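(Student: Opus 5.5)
The plan is to check the statement directly against the definitions of \([z]_m\) and \(\CS_V(z)\). It looks like it should need nothing beyond the hypothesis itself. The cited Lemmas~10 and~11 of~\cite{tomita-i} presumably handle a stronger situation. In the form stated here, both conclusions appear to follow from elementary closure properties of factor sets.

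For the first conclusion, take an arbitrary element of \([z]_{m-1}\). By definition it has the form \(Z[i..i+m-2]\) for some \(1\le i\le p\), where \(p=|z|\). This word is the length-\((m-1)\) prefix of \(Z[i..i+m-1]\), which lies in \([z]_m\). By hypothesis, \([z]_m\subseteq\Fac_v(m)\), so \(Z[i..i+m-1]\) occurs in \(v\). Any factor of a factor of \(v\) is a factor of \(v\), so \(Z[i..i+m-2]\in\Fac_v(m-1)\). Since \(i\) was arbitrary, this gives \([z]_{m-1}\subseteq\Fac_v(m-1)\).

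For the second conclusion, set \(\ell=m-1\). The assumption \(m\ge|z|=p\) gives \(\ell\ge p-1\), so the circuit \(C_z(\ell)\) is defined. Its arcs are labelled \(Z[j..j+\ell]\) for \(1\le j\le p\), and these labels are exactly the elements of \([z]_{\ell+1}=[z]_m\). By the stated criterion, \(C_z(\ell)\) is present in \(\Gamma_v(\ell)\) precisely when \([z]_{\ell+1}\subseteq\Fac_v(\ell+1)\), and this is the hypothesis.

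One point needs checking, since it is the only possible subtlety. Presence of a circuit requires its vertices to exist as well as its arcs. The vertices are the length-\(\ell\) prefixes and suffixes of the arc labels. These lie in \([z]_{m-1}\subseteq\Fac_v(m-1)\) by the first part, so they are vertices of \(\Gamma_v(m-1)\) by Definition~\ref{def:graph}. Consecutive arcs match at these vertices by \(p\)-periodicity of \(Z\), as already observed where \(C_z(\ell)\) was defined.
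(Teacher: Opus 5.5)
Your proof is correct: the first conclusion follows because every factor of a factor of $v$ is a factor of $v$, so each element of $[z]_{m-1}$, being a prefix of an element of $[z]_m$, lies in $\Fac_v(m-1)$. The second conclusion is the presence criterion $[z]_{\ell+1}\subseteq\Fac_v(\ell+1)$ with $\ell=m-1$, which is literally the hypothesis. The paper gives no proof of its own beyond citing Tomita and I. Where it applies the lemma, in showing that the admissible values of $\ell$ for a root form an interval $[p_z,M_z]$, it justifies the step the same way, by ``taking prefixes of its arc labels,'' so your argument is essentially the paper's reasoning.
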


\begin{lemma}[Consequence of Lemma~14 of~\cite{tomita-i}, restricted to a fixed \(\ell\)]\label{lem:independence}
For a finite word \(v\) and a fixed \(\ell\), the circuits
\(C_z(\ell)\in\CS_v(z)\), over \(z\in\Lyn_v\), are pairwise distinct
and independent in \(\Gamma_v(\ell)\).
\end{lemma}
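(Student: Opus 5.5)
The plan is to prove the two assertions separately: distinctness directly, and independence by a triangular argument. For each circuit \(C_z(\ell)\in\CS_v(z)\) we single out a distinguished arc that no circuit with a smaller distinguished arc can traverse. Throughout, fix \(\ell\) and write \(p=|z|\). By definition of \(\CS_v(z)\) we have \(\ell\ge p-1\), so every arc label of \(C_z(\ell)\) has length \(\ell+1\ge p\).

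\emph{Distinctness.} The arc labels of \(C_z(\ell)\) are exactly the words of \([z]_{\ell+1}=\Fac_Z(\ell+1)\). Since \(\ell+1\ge p\), the set of their length-\(p\) prefixes is \([z]_p=[z]\). Suppose \(C_z(\ell)=C_{z'}(\ell)\) for Lyndon words \(z,z'\). Then the two circuits have the same arc labels, hence \([z]=[z']\). A conjugacy class contains exactly one Lyndon word, so \(z=z'\).

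\emph{Distinguished arc.} Let \(e_z\) be the arc of \(C_z(\ell)\) labelled \(u_z=Z[1..\ell+1]\), the length-\((\ell+1)\) prefix of \(z^{\mathbb N}\). The first step is to show that \(u_z\) is the lexicographically smallest label on \(C_z(\ell)\). The other labels are the length-\((\ell+1)\) prefixes of \(z_i^{\mathbb N}\) for the rotations \(z_i\) of \(z\). Since \(z\) is Lyndon, \(z<z_i\) with a difference inside the first \(p\le\ell+1\) letters, so \(u_z\) is strictly smaller than every other label.

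The second step, which I expect to be the main obstacle, is to show \(u_z\ne u_{z'}\) for distinct Lyndon \(z,z'\). Suppose \(u_z=u_{z'}\). Then \(p\ne p'\), since equal lengths would force \(z=u_z[1..p]=z'\). Say \(p<p'\). Then \(z'\), a prefix of \(u_{z'}=u_z\), is a prefix of \(z^{\mathbb N}\). Hence \(z'=z^k y\) with \(k\ge1\) and \(y\) a proper prefix of \(z\). We have \(y\neq\varepsilon\), since \(z'\) is primitive. Now \(y\) is a nonempty proper suffix of \(z'\) that is also a proper prefix of \(z'\), so \(y<z'\). This contradicts the standard fact that a Lyndon word is strictly smaller than each of its proper nonempty suffixes.

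\emph{Independence.} Suppose \(\sum_z c_z\,\mu(C_z(\ell))=0\) with not all \(c_z\) zero. Among the \(z\) with \(c_z\ne0\), choose \(z_0\) with \(u_{z_0}\) lexicographically least; by the second step this choice is unique. Take any other \(z\) in the support. Every arc label of \(C_z(\ell)\) is at least \(u_z\), and \(u_z>u_{z_0}\). Hence \(C_z(\ell)\) does not traverse \(e_{z_0}\).

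Moreover, \(C_{z_0}(\ell)\) traverses \(e_{z_0}\) exactly once, because the arc labels \(Z[i..i+\ell]\) for \(1\le i\le p\) are distinct, their length-\(p\) prefixes being the distinct rotations of the primitive word \(z_0\). Reading the coordinate at \(e_{z_0}\) therefore gives \(c_{z_0}=0\), a contradiction. The only extra check needed is that an arc is identified by its label, which holds by Definition~\ref{def:graph}.
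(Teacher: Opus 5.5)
Your proof is correct. It takes a genuinely different route from the paper, because the paper does not prove this lemma at all. The paper imports it as a consequence of Lemma~14 of Tomita and I; the phrase ``restricted to a fixed \(\ell\)'' suggests the cited statement is broader.

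You instead give a self-contained triangularity argument:
\begin{enumerate}
\item Attach to each \(C_z(\ell)\) the label \(u_z\), the length-\((\ell+1)\) prefix of \(z^{\mathbb N}\).
\item Show from Lyndon minimality that \(u_z\) is strictly smaller than every other label on \(C_z(\ell)\).
\item Show that \(z\mapsto u_z\) is injective. This uses the fact that a Lyndon word is smaller than its proper nonempty suffixes, hence has no border.
\item In a vanishing linear combination, read the coordinate at the arc labelled by the least \(u_z\) in the support. Only that one circuit traverses this arc, so its coefficient must be zero.
\end{enumerate}

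This buys two things. First, the step no longer depends on the external reference. Second, it proves more than the paper needs. Nothing in your argument uses \(v\), the hypothesis \(z\in\Lyn_v\), or the presence of the circuits beyond the existence of their arcs. So the circuits \(C_z(\ell)\), over all Lyndon words \(z\) with \(|z|\le\ell+1\), are already independent in the full de Bruijn graph of order \(\ell\), and hence in any subgraph containing their arcs, such as \(\Gamma_v(\ell)\).

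Citing Tomita and I keeps the paper short and uniform with the other lemmas it imports from the same source. For the fixed-\(\ell\) use the paper actually makes, namely the bound \(b_\ell\le d_\ell+1\), your argument suffices.

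One small tightening. In the distinctness step, ``same arc labels, hence \([z]=[z']\)'' silently uses \(|z|=|z'|\). This holds because equal circuits have equal length, or because \(|[z]_{\ell+1}|=|z|\). More simply, equal label sets have equal minima, so \(u_z=u_{z'}\), and your injectivity step then gives \(z=z'\) directly.
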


\begin{lemma}[Lemma~16 of~\cite{tomita-i}]\label{lem:class-source}
Let \(v\) be a finite word and \(z\in\Lyn_v\) satisfy
\(\Sq_v(z)\ne\varnothing\). Put \(p=|z|\), let \(r\) be the largest
integer such that \(x^{2r}\in\Sq_v(z)\) for some \(x\in[z]\), and
let \(t\) be the number of such \(x\) at this largest exponent.
Then \(1\le t\le p\) and
\[
 |\Sq_v(z)|=p(r-1)+t,\qquad |\CS_v(z)|\ge2p(r-1)+t+1.
\]
\end{lemma}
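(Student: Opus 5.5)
The plan is to prove the two assertions separately: first the exact count \(|\Sq_v(z)|\), then the lower bound on \(|\CS_v(z)|\), both by looking at what one occurrence of a square \(x^{2r}\) with \(x\in[z]\) forces into \(\Fac_v\). Throughout, write every element of \(\Sq_v(z)\) as \(x^{2k}\) with \(x\in[z]\) and \(k\ge1\). Since \(z\) is primitive, its \(p\) rotations are pairwise distinct. By uniqueness of the primitive root, distinct pairs \((x,k)\) give distinct words. Hence \(|\Sq_v(z)|\) equals the number of pairs \((x,k)\) with \(x^{2k}\) a factor of \(v\).

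\textbf{Counting squares.} By definition of \(r\), no pair with \(k>r\) occurs, and exactly \(t\) pairs with \(k=r\) occur; clearly \(1\le t\le p\). It remains to show that every pair with \(k\le r-1\) occurs, for every rotation. Fix an occurrence of some \(x_0^{2r}\) with \(x_0\in[z]\); it is a factor of \(Z\) of length \(2rp\). For each offset \(0\le o\le p-1\), the factor of this occurrence starting at offset \(o\) and of length \(2(r-1)p\) fits inside, because \(o+2(r-1)p\le 2rp\). This factor is \(x^{2(r-1)}\), where \(x\) is the rotation of \(x_0\) by \(o\). Its prefixes \(x^{2k}\) with \(k\le r-1\) are then also factors of \(v\). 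As \(o\) ranges over all offsets, all \(p\) rotations appear, which gives \(|\Sq_v(z)|=p(r-1)+t\).

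\textbf{Counting circuits.} By definition, \(C_z(\ell)\in\CS_v(z)\) exactly when \([z]_{\ell+1}\subseteq\Fac_v(\ell+1)\), where \(\ell\ge p-1\). Put \(m=\ell+1\ge p\). The elements of \([z]_m\) are indexed by their starting offset modulo \(p\) in \(Z\). Let \(x_1,\dots,x_t\) be the distinct rotations with \(x_i^{2r}\) a factor of \(v\), and let \(o_i\) be the offset of \(x_i\); these offsets are pairwise distinct. For \(m\le2rp\), an occurrence of \(x_i^{2r}\) contains the length-\(m\) factors of \(Z\) at the offsets \(o_i,o_i+1,\dots,o_i+(2rp-m)\) taken modulo \(p\). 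This is a cyclic interval of \(L=\min(p,\,2rp-m+1)\) residues.

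The key combinatorial step is the following claim: \(t\) cyclic intervals of common length \(L\) with pairwise distinct starting points cover at least \(\min(p,\,L+t-1)\) residues. I would prove it by sorting the starts cyclically; each new start extends the union by at least one residue until everything is covered. Consequently \([z]_m\subseteq\Fac_v(m)\) holds whenever \(L+t-1\ge p\). Writing \(m=(2r-1)p+j\), this condition reads \(j\le t\), so it holds for every \(m\le(2r-1)p+t\). Smaller \(m\), down to \(p\), are covered as well, either by the same interval estimate (where \(L=p\)) or directly by Lemma~\ref{lem:prefix}. Therefore \(C_z(\ell)\in\CS_v(z)\) for every integer \(\ell\) with \(p-1\le\ell\le(2r-1)p+t-1\). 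These are \((2r-1)p+t-1-(p-1)+1=2p(r-1)+t+1\) pairwise distinct circuits, since they live in different graphs \(\Gamma_Z(\ell)\).

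The main obstacle is the covering claim at the top lengths \(m>(2r-1)p\), where each single square occurrence no longer supplies all of \([z]_m\). This is exactly where the extra \(t-1\) circuits come from. The point that needs care is that distinct starting residues are guaranteed precisely because the \(x_i\) are distinct rotations of the primitive word \(z\). Everything else is routine bookkeeping about factors of the periodic word \(Z\).
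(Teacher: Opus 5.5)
Your proof is correct. The paper gives no proof of this statement. It imports it verbatim as Lemma~16 of Tomita and I and uses it only as a black box, through the inequality \(|\CS_v(z)|\ge2p(r-1)+t+1\) in Corollary~\ref{cor:class}.

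You instead give a self-contained, elementary verification. The square count follows because a single occurrence of \(x_0^{2r}\) contains \(x^{2(r-1)}\) for every rotation \(x\) of \(z\). The circuit count follows by exhibiting an explicit interval \(p-1\le\ell\le(2r-1)p+t-1\) of lengths for which \(C_z(\ell)\) is present, obtained by pooling the offsets covered by the \(t\) maximal squares. The citation keeps the paper short and draws all the finite-word machinery from one source. Your version removes the dependence on Tomita--I for this step and shows that the additional \(t-1\) circuits come precisely from that pooling. Your interval is only a lower bound, not the exact set of present lengths, which is all the statement requires.

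One step needs tightening. Your incremental justification of the covering claim is not valid as stated if the cyclic sort begins at an arbitrary start. For \(p=10\), \(L=4\) and starts \(0,7,8\), the interval starting at \(8\) adds nothing to \(\{0,1,2,3,7,8,9\}\), although \(4,5,6\) are still uncovered. You only use the case \(L+t-1\ge p\), and there the claim is immediate. The \(t\) cyclic gaps between consecutive starts are positive integers summing to \(p\), so each is at most \(p-(t-1)\le L\), and the intervals therefore chain around the whole cycle. In general the union has exactly \(\sum_i\min(g_i,L)\) residues, where the \(g_i\) are these gaps, and this gives your bound \(\min(p,L+t-1)\).
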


\subsection{Bounds for the circular word}

For each \(z\in\Zset\), we use the circuits \(C_z(\ell)\) with
\(\ell\ge p_z\). The values of \(\ell\) for which these circuits
are present form a nonempty finite interval.
To see this, take a square \(x^{2r}\in\Sq_w^\circ(z)\), where
\(x\in[z]\). The length-\(p_z+1\) factors starting at positions
\(1,\ldots,p_z\) of its prefix \(xx\) contain all of
\([z]_{p_z+1}\). Thus \(C_z(p_z)\) is present.
If \(C_z(L)\) is present, taking prefixes of its arc labels shows
that \(C_z(\ell)\) is present for every \(p_z\le\ell\le L\),
as in Lemma~\ref{lem:prefix}.
For \(\ell\ge n-1\), each vertex identifies one of the \(n\)
starting positions on the circular word, and the whole graph is a
single elementary circuit of length \(n\), by Lemma~\ref{lem:basic}.
Thus \(C_z(\ell)\), which is elementary and has length \(p_z<n\),
cannot be present.
Consequently, the interval is \([p_z,M_z]\) for some \(M_z\le n-2\).
Define
\begin{equation}\label{eq:counts}
 \begin{split}
 M_z&=\max\{\ell\ge p_z:[z]_{\ell+1}\subseteq\Fac_W(\ell+1)\},\\
 K_z&=M_z-p_z+1,\qquad
 b_\ell=|\{z\in\Zset:p_z\le\ell\le M_z\}|.
 \end{split}
\end{equation}
Thus \(K_z\) is the number of circuits used for root \(z\), and
\(b_\ell\) is the number used within \(\Gamma_W(\ell)\).
Counting the pairs \((z,\ell)\) in either order gives
\begin{equation}\label{eq:double-count}
 \sum_zK_z=\sum_{\ell=1}^{n-2}b_\ell.
\end{equation}

To apply the finite-word results, put \(v=ww\).
Since \(\Fac_v(m)=\Fac_W(m)\) for \(m\le n\), we have
\(\Gamma_v(\ell)=\Gamma_W(\ell)\) for \(1\le\ell\le n-2\).
Also \(\Sq_w^\circ(z)\subseteq\Sq_v(z)\).
To see that each \(z\in\Zset\) belongs to \(\Lyn_v\), take
\(x^{2r}\in\Sq_w^\circ(z)\), where \(x\in[z]\).
In its prefix \(xx\), the length-\(p_z\) factor starting at position
\(i\) is \(x[i..p_z]x[1..i-1]\), for \(1\le i\le p_z\).
These factors are all rotations of \(x\), including \(z\).
Since \(xx\) occurs in \(v\), so does \(z\).

\begin{corollary}\label{cor:graph}
For \(1\le\ell\le n-2\),
\begin{equation}\label{eq:graph-bound}
 b_\ell\le d_\ell+1,\qquad d_\ell=0\ \Longrightarrow\ b_\ell=0.
\end{equation}
\end{corollary}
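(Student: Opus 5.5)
The plan is to apply Lemma~\ref{lem:independence} to the finite word \(v=ww\) at the fixed level \(\ell\), and then use Lemma~\ref{lem:dimension} together with the last statement of Lemma~\ref{lem:basic}.

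Fix \(1\le\ell\le n-2\). Each root \(z\in\Zset\) counted by \(b_\ell\) satisfies \(p_z\le\ell\le M_z\). By the definition of \(M_z\) and the interval property shown above, \([z]_{\ell+1}\subseteq\Fac_W(\ell+1)=\Fac_v(\ell+1)\). Hence \(C_z(\ell)\in\CS_v(z)\), and we have already checked that \(z\in\Lyn_v\). Lemma~\ref{lem:independence} then says that these \(b_\ell\) circuits are pairwise distinct and independent in \(\Gamma_v(\ell)=\Gamma_W(\ell)\). Since \(\Gamma_W(\ell)\) is strongly connected by Lemma~\ref{lem:basic}, Lemma~\ref{lem:dimension} bounds the number of independent circuits by \(d_\ell+1\). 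This gives \(b_\ell\le d_\ell+1\).

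For the implication, suppose \(d_\ell=0\). By Lemma~\ref{lem:basic}, \(\Gamma_W(\ell)\) is then a single elementary circuit of length \(n\). Every circuit in such a graph traverses the whole cycle a positive number of times, so its length is a multiple of \(n\). If some \(z\) were counted by \(b_\ell\), the circuit \(C_z(\ell)\) would be present and would have length \(p_z\). But \(0<p_z<n\) by \eqref{eq:root-data}, which is a contradiction. Hence \(b_\ell=0\).

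The only step needing care is confirming that the hypotheses of Lemma~\ref{lem:independence} are met for \(v=ww\). This requires \(\Gamma_v(\ell)=\Gamma_W(\ell)\), which holds since \(\ell+1\le n-1\le n\), and it requires each \(z\) to lie in \(\Lyn_v\). Both facts were established just before the statement, so I expect no real obstacle.
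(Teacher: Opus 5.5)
Your proof is correct and follows the paper's argument. You apply Lemma~\ref{lem:independence} to \(v=ww\) to get independence of the \(b_\ell\) circuits, bound their number by \(d_\ell+1\) using Lemma~\ref{lem:dimension}, and, when \(d_\ell=0\), compare with \(p_z<n\). The one minor difference is that you note every circuit in an \(n\)-cycle has length divisible by \(n\), whereas the paper uses that \(C_z(\ell)\) is elementary for \(\ell\ge p_z\); both arguments are valid.
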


\begin{proof}
By Lemma~\ref{lem:independence} applied to \(v=ww\), the \(b_\ell\)
circuits are independent. Lemma~\ref{lem:dimension} therefore gives
\(b_\ell\le d_\ell+1\).
If \(d_\ell=0\), Lemma~\ref{lem:basic} gives a single elementary
circuit of length \(n\), whereas every circuit counted by \(b_\ell\)
is elementary and has length \(p_z<n\). Hence \(b_\ell=0\).
\end{proof}

\begin{corollary}\label{cor:class}
For every \(z\in\Zset\),
\begin{equation}\label{eq:class-bound}
 K_z\ge s_z,\qquad K_z\ge2s_z-p_z.
\end{equation}
\end{corollary}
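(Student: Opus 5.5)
The plan is to apply Lemma~\ref{lem:class-source} to the finite word \(v=ww\) and then transfer the resulting circuits back to \(W\), while keeping track of the distinction between \(\Sq_v(z)\) and \(\Sq_w^\circ(z)\).

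First, I would fix \(z\in\Zset\) and put \(p=p_z\). Let \(r\) be the largest exponent with \(x^{2r}\in\Sq_w^\circ(z)\) for some \(x\in[z]\), and let \(t\) be the number of such \(x\). Every square \(x^{2r'}\) with \(r'<r\) is a prefix of some \(x^{2r}\) whose length is at most \(n\); hence it lies in \(\Sq_w^\circ(z)\). The counting part of Lemma~\ref{lem:class-source} then gives \(s_z=p(r-1)+t\) with \(1\le t\le p\). I would not invoke the lemma's circuit bound directly, because squares of \(v\) longer than \(n\) could inflate it. Instead I would establish the circuit bound by hand in the following form: \(C_z(\ell)\) is present for every \(p\le\ell\le M\), where \(M=2pr-2+t\). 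The subtraction of \(2\) is meant to match the count \(2p(r-1)+t+1\) of Lemma~\ref{lem:class-source}, shifted to start at \(\ell=p-1\).

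The key step is producing the factors. The \(t\) rotations \(x\) that achieve the top exponent occupy \(t\) cyclically consecutive starting offsets in \(Z\); this is because the maximal \(Z\)-periodic runs in \(W\) covering them must be long. I would take a maximal run of period \(p\) in \(W\) containing the squares \(x^{2r}\) for all \(t\) such \(x\). Any \(t\) consecutive rotations with common exponent \(2r\) fit inside a run of length \(2pr+t-1\). A \(p\)-periodic run of length \(L\ge p+\ell\) contains all of \([z]_{\ell+1}\). Thus \(C_z(\ell)\) is present for \(\ell\le L-p-1\), that is, for \(\ell\le 2pr+t-2-p\). This yields \(K_z\ge 2pr+t-1-2p = 2p(r-1)+t-1\). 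I expect the main obstacle to be exactly these off-by-one adjustments. If the direct run argument falls short by one, I would fall back on the argument of Tomita and I's Lemma~16, which is applied inside \(v\) and only uses factors of length at most \(2pr+t-1\le n\); these are exactly \(W\)-factors, so the restriction to \(\ell\ge p\) is what costs the \(+1\) versus \(\ell\ge p-1\).

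With \(K_z\ge 2p(r-1)+t\), both inequalities in \eqref{eq:class-bound} follow from \(s_z=p(r-1)+t\). For the second, \(2s_z-p=2p(r-1)+2t-p\le 2p(r-1)+t\) because \(t\le p\). For the first, \(K_z-s_z\ge p(r-1)\ge0\). If the bound obtained is only \(2p(r-1)+t-1\), I would recover the missing unit from the observation \(t\le p\): the first inequality needs \(p(r-1)\ge1\) or \(t\ge\dots\), which fails when \(r=1\), so in that case I would show directly that a single square \(xx\) of length \(2p\le n\) yields a run of length \(2p+t-1\) and circuits for \(\ell=p,\dots,p+t-1\), giving \(K_z\ge t=s_z\).
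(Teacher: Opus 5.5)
\paragraph{The gap.} Your direct route depends on the claim that the \(t\) rotations attaining the top exponent occupy cyclically consecutive offsets and that all \(t\) squares \(x^{2r}\) lie in one maximal \(p\)-periodic run of length \(2pr+t-1\). This claim is false.

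Take \(w=\word{aaabaaabcabaaabaac}\) (\(n=18\), primitive) and \(z=\word{aaab}\). The only \(\word{c}\)-free factors of length \(8\) are \(\word{aaabaaab}\) and \(\word{abaaabaa}\). Each is a maximal run of period \(4\), bounded by \(\word{c}\)'s. So \(r=1\) and \(t=2\). The top rotations \(\word{aaab}\) and \(\word{abaa}\) sit at offsets \(0\) and \(2\) of \(Z\), which are not adjacent mod \(4\), and no run contains both squares. Your \(r=1\) rescue relies on the same single-run assumption. The corollary itself still holds in this example: \(M_z=6\) and \(K_z=3\ge s_z=2\).

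The arithmetic also fails to close, independently of the above:
\begin{enumerate}
\item A \(p\)-periodic factor of length \(L\) contains \([z]_{\ell+1}\) iff \(L\ge\ell+p\), i.e.\ \(\ell\le L-p\), not \(L-p-1\).
\item Your announced \(M=2pr-2+t\) does not match the count you quote; it should be \(2pr-p+t-1\).
\item With the weakened bound \(K_z\ge2p(r-1)+t-1\), the second inequality fails when \(t=p\). You never address this case.
\end{enumerate}
The fallback to Tomita and I's argument is left unspecified. Its side claim \(2pr+t-1\le n\) is false: for \(w=\word{aabaaba}\) and \(z=\word{aab}\) one has \(r=1\), \(t=3\), and \(8>7\).

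\paragraph{How to repair it.} The inflation you feared is harmless, so the detour was unnecessary. The paper applies Lemma~\ref{lem:class-source} to \(v=ww\) with \(s'=|\Sq_v(z)|\ge s_z\). Since \(\Fac_v(m)\subseteq\Fac_W(m)\) for every \(m\), each \(C_z(\ell)\in\CS_v(z)\) with \(\ell\ge p\) satisfies \(p\le\ell\le M_z\). Hence \(K_z\ge|\CS_v(z)|-1\ge2p(r'-1)+t'\), with \(r',t'\) computed in \(v\). This quantity equals \(s'+p(r'-1)\ge s_z\), and it also equals \(2s'-t'\ge2s'-p\ge2s_z-p\). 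Both targets are monotone in \(s'\), so extra squares of \(v\) only strengthen the bound.

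If you prefer a self-contained route, drop the single run. Presence of \(C_z(\ell)\) only requires each word of \([z]_{\ell+1}\) to occur somewhere, possibly at different occurrences.
\begin{enumerate}
\item A top square \(x^{2r}\) supplies the \(2pr-\ell\) cyclically consecutive offsets starting at the offset of \(x\).
\item The \(t\) top offsets cut \(\mathbb{Z}/p\mathbb{Z}\) into \(t\) gaps of total size \(p\). Each gap is at least \(1\), hence at most \(p-t+1\).
\item So all offsets are covered whenever \(\ell\le2pr-p+t-1\).
\end{enumerate}
This gives \(M_z\ge2pr-p+t-1\), hence \(K_z\ge2p(r-1)+t\), using only factors of length at most \(2pr\le n\). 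Your consecutive-block picture is exactly the extremal case. After this, your final algebra with \(s_z=p(r-1)+t\) goes through. One small correction there: for \(r'<r\), the square \(y^{2r'}\) is a factor, not necessarily a prefix, of a top square.
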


\begin{proof}
Fix \(z\), and put \(p=p_z\), \(s'=|\Sq_v(z)|\), and \(R=|\CS_v(z)|\),
where \(v=ww\). Then \(s'\ge s_z\).
Every factor of \(v\) occurs in \(W\). Thus each circuit
\(C_z(\ell)\in\CS_v(z)\) is also present in \(\Gamma_W(\ell)\).
By definition, \(K_z\) counts all such circuits with \(\ell\ge p\),
whereas \(\CS_v(z)\) also allows \(\ell=p-1\).
At most the single circuit \(C_z(p-1)\) is therefore excluded,
giving \(K_z\ge R-1\).

By Lemma~\ref{lem:class-source}, there are integers \(r\ge1\) and
\(1\le t\le p\) such that
\[
 s'=p(r-1)+t,\qquad R\ge2p(r-1)+t+1.
\]
Subtracting the possibly excluded circuit gives
\(K_z\ge R-1\ge2p(r-1)+t\).
Since \(r\ge1\), we obtain the first bound:
\[
 K_z\ge2p(r-1)+t=s'+p(r-1)\ge s'\ge s_z.
\]
For the second bound, use \(t\le p\):
\[
 K_z\ge2p(r-1)+t=2s'-t\ge2s'-p\ge2s_z-p_z.
\]
\end{proof}

\section{Proof of the upper bound}\label{sec:proof}

By the first inequality in Corollary~3.5, we have $s_z\le K_z$
for every $z\in Z$. Summing over all roots and using~(5) and~(7),
we obtain
\[
S^\circ(w)
=
\sum_{z\in Z}s_z
\le
\sum_{z\in Z}K_z
=
\sum_{\ell=1}^{n-2}b_\ell.
\]
Thus the number of distinct squares is at most the total number
of circuits counted by the $b_\ell$.

If the inequality
\[
b_\ell\le \frac{3}{2}d_\ell
\]
held for every $1\le\ell\le n-2$, then Lemma~2.3 would give
\[
S^\circ(w)
\le
\sum_{\ell=1}^{n-2}b_\ell
\le
\frac{3}{2}\sum_{\ell=1}^{n-2}d_\ell
=
\frac{3}{2}(n-\sigma),
\]
as desired. We therefore examine when this inequality can fail.

Fix $1\le\ell\le n-2$. By Corollary~3.4, we have
$b_\ell\le d_\ell+1$, and $b_\ell=0$ whenever $d_\ell=0$.
We distinguish three cases.

If $d_\ell=0$, then $b_\ell=0$, so the desired inequality holds.

If $d_\ell=1$, then $b_\ell\le2$. Since $b_\ell$ is a
nonnegative integer, either $b_\ell\le1$ or $b_\ell=2$.
In the former case,
\[
b_\ell\le1\le\frac{3}{2}d_\ell.
\]
In the latter case, however,
\[
b_\ell=2>\frac{3}{2}=\frac{3}{2}d_\ell.
\]

Finally, if $d_\ell\ge2$, then
\[
b_\ell
\le d_\ell+1
\le d_\ell+\frac{d_\ell}{2}
=\frac{3}{2}d_\ell.
\]

Consequently, the inequality $b_\ell\le 3d_\ell/2$ fails
exactly when $d_\ell=1$ and $b_\ell=2$.
In this exceptional case, the two circuits contribute $2$
to the total count, exceeding the desired contribution
$3/2$ by exactly $1/2$.
We will handle this excess by reducing the weight of one
of the two circuits from $1$ to $1/2$, while ensuring that
the total remaining weight still bounds the number of squares.
To justify this reduction, we first prove a structural
property of the two circuits in the exceptional case.

\subsection{Circuits at larger factor lengths}

\begin{lemma}
Suppose that $d_\ell=1$ and $b_\ell=2$.
Then there exists a root $z\in Z$ with
$p_z\le\ell\le M_z$ such that
\[
M_z\ge\ell+p_z.
\]
\end{lemma}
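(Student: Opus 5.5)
The plan is to read off the shape of \(\Gamma_W(\ell)\) when \(d_\ell=1\), and then to show that the cyclic walk read from \(W\) must go twice in a row around the circuit of one of the two roots.

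\emph{Shape of the graph.} By \eqref{eq:outdegrees}, \(d_\ell=1\) means exactly one vertex \(u\) has outdegree \(2\), and every other vertex has outdegree \(1\). Together with strong connectivity (Lemma~\ref{lem:basic}), this makes \(\Gamma_W(\ell)\) the union of exactly two elementary circuits \(A\) and \(B\) through \(u\). They share a (possibly empty) common path that ends at \(u\) and begins at the unique vertex of indegree \(2\).

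\emph{The two circuits are \(A\) and \(B\).} Since \(b_\ell=2\), there are two roots \(z_1\ne z_2\) in \(\Zset\) whose circuits \(C_{z_1}(\ell)\) and \(C_{z_2}(\ell)\) are present. Both are elementary, because \(\ell\ge p_{z_i}\). By Lemma~\ref{lem:independence} they are distinct. Hence \(\{C_{z_1}(\ell),C_{z_2}(\ell)\}=\{A,B\}\), with \(|A|=p_{z_1}\) and \(|B|=p_{z_2}\).

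\emph{Reduction to a statement about the walk.} As in the proof of Lemma~\ref{lem:basic}, the \(n\) cyclic positions of \(W\) give a closed walk of length \(n\) that covers every arc. Cutting it at its visits to \(u\) writes it as a cyclic sequence of loops, each equal to \(A\) or to \(B\). Suppose this sequence contains two consecutive copies of the same loop, say \(AA\), where \(A\) is the circuit of \(z=z_1\) and \(p=p_z\). That stretch of the walk is read from an actual factor \(W[a..a+\ell+2p-1]\) of length \(\ell+2p\), which is a factor of \(z^{\mathbb Z}\). Its \(p\) windows of length \(\ell+p+1\), starting at offsets \(0,\ldots,p-1\), give all of \([z]_{\ell+p+1}\). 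Hence \(C_z(\ell+p)\) is present, and \(M_z\ge\ell+p_z\), which is the claim. Some care is needed at the cut point \(u\), but going around from \(u\) twice is exactly such a reading.

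\emph{Excluding strict alternation.} It remains to rule out the case where the loops strictly alternate, so that the cyclic loop sequence is \((AB)^k\). Consecutive positions of \(W\) correspond to consecutive arcs, and the arc sequence determines the letters. So the walk being periodic with period \(p_{z_1}+p_{z_2}\) makes \(p_{z_1}+p_{z_2}\) a period of \(W\). The least period of \(W\) is \(n\), so \(k=1\) and \(n=p_{z_1}+p_{z_2}\). I would then derive a contradiction in this situation. The vertex \(u\), and more generally the label of length \(\ell+c\) along the shared path (where \(c\) is that path's length), is a common factor of \(z_1^{\mathbb Z}\) and \(z_2^{\mathbb Z}\). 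It therefore has periods \(p_{z_1}\) and \(p_{z_2}\). If its length reaches \(p_{z_1}+p_{z_2}-\gcd(p_{z_1},p_{z_2})\), the Fine--Wilf theorem gives a common period, forcing \([z_1]=[z_2]\), a contradiction.

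\emph{Main obstacle.} I expect the hardest step to be this final one, because \(\ell\le n-2=p_{z_1}+p_{z_2}-2\) does not by itself reach the Fine--Wilf threshold. I would try to close the gap in two ways. First, I would count vertices: \(C_\ell=p_{z_1}+p_{z_2}-c\le n\), and \(C_{\ell+1}=C_\ell+1\). Second, I would use that each \(z_i\) has a square of length at most \(n\) in \(W\). This square contains a walk of length \(2p_{z_i}-\ell\) inside its own circuit, which the alternating pattern \(AB\) can only accommodate if \(2p_{z_i}-\ell\) is small. Combining these length constraints should either force the Fine--Wilf threshold or contradict \(2p_{z_i}<n\) from \eqref{eq:root-data}.
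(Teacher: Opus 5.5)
Up to the alternation step your route is the paper's: the single vertex of outdegree two, the identification of the two present circuits with the only two elementary circuits $A,B$, and the argument that two consecutive traversals of the circuit of $z$ read a factor of $z^{\mathbb Z}$ of length $\ell+2p_z$, whose $p_z$ windows of length $\ell+p_z+1$ give $[z]_{\ell+p_z+1}$. The gap is in excluding strict alternation. You flag it as the main obstacle and propose a Fine--Wilf argument (``combining these length constraints should\dots''), but you never carry it out. As written, the proof does not show that some circuit is traversed twice in a row, and the conclusion rests on exactly that.

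The missing step is one line, and you cite its ingredient yourself. By \eqref{eq:root-data}, $2p_{z_1}<n$ and $2p_{z_2}<n$, so $p_{z_1}+p_{z_2}<n$. You correctly derived that alternation makes $p_{z_1}+p_{z_2}$ a period of $W$, forcing $n=p_{z_1}+p_{z_2}$. That contradicts $p_{z_1}+p_{z_2}<n$ outright, since $W$ has no positive period smaller than $n$. This is exactly the paper's argument. Fine--Wilf, vertex counting and the analysis of the shared path are unnecessary.

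A smaller point: you assert without proof that the factor read along $AA$ lies in $z^{\mathbb Z}$. The paper justifies it by noting that the vertex labels before, between and after the two traversals coincide. This gives period $p_z$ on ranges that cover the factor because $\ell\ge p_z$. Alternatively, consecutive arcs of $C_z(\ell)$ are consecutive windows of $z^{\mathbb Z}$.
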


\begin{proof}
We first show that the walk read from \(W\) traverses one of the
two circuits twice consecutively.
By Lemma~\ref{lem:basic} and~\eqref{eq:outdegrees}, the graph
\(\Gamma_W(\ell)\) is strongly connected, has one vertex \(v_0\) of
outdegree two, and has outdegree one elsewhere.
After choosing either arc leaving \(v_0\), the walk is determined until
it returns to \(v_0\). It cannot revisit another vertex first, since
that would give a circuit with no exit, contradicting strong connectivity.
The two choices therefore give elementary circuits \(A,B\).
An elementary circuit avoiding \(v_0\) would also have no exit.
Thus \(A,B\) are the only elementary circuits. By
Lemma~\ref{lem:independence}, the two circuits counted by \(b_\ell\)
are distinct and hence are exactly \(A,B\). They may share vertices
or arcs. Write \(p,q\) for their respective lengths.

The walk read from \(W\) visits \(v_0\) infinitely often in both
directions. Between successive visits, it traverses either \(A\) or
\(B\). If neither is traversed twice consecutively, these traversals
alternate. The arc labels, and therefore their first letters, would
then repeat every \(p+q\) positions, making \(p+q\) a period of \(W\).
But \(2p<n\) and \(2q<n\) by~\eqref{eq:root-data}, so \(p+q<n\),
contrary to the least period of \(W\).

We next use these two traversals to obtain a circuit whose vertex
labels are longer. Let \(z\) be the root of the circuit traversed
twice consecutively, and write \(p=p_z\).
If the first traversal starts at position \(a\),
the two traversals read the factor
\(U=W[a..a+\ell+2p-1]\) of length \(\ell+2p\).
The vertices before, between, and after the traversals coincide, so
\begin{equation}\label{eq:repeated-factors}
 U[1..\ell]=U[p+1..p+\ell]=U[2p+1..2p+\ell].
\end{equation}
These equalities give \(U[i]=U[i+p]\) for
\(i\in[1,\ell]\cup[p+1,p+\ell]\). Since \(\ell\ge p\), these
ranges cover \([1,|U|-p]\). Thus \(p\) is a period of \(U\).
Its first \(p\) letters are a rotation of \(z\), because its initial
vertex belongs to \(C_z(\ell)\). Consequently, \(U\) is a factor
of \(z^{\mathbb Z}\).

The factors of \(U\) of length \(\ell+p+1\) starting at positions
\(1,\ldots,p\) all fit inside \(U\): the last ends at
\(p+(\ell+p+1)-1=\ell+2p=|U|\).
Their starting positions represent all residues modulo \(p\), so
they form \([z]_{\ell+p+1}\). Since \(U\) occurs in \(W\),
\([z]_{\ell+p+1}\subseteq\Fac_W(\ell+p+1)\).
These length-\(\ell+p+1\) factors are exactly the arc labels of
\(C_z(\ell+p)\). Thus this circuit is present in
\(\Gamma_W(\ell+p)\), and \(M_z\ge\ell+p_z\).
\end{proof}

\noindent\textbf{Example.}
Let \(w=\word{abaabaabab}\) and \(\ell=3\).
The graph \(\Gamma_W(3)\) has four vertices and five arcs, so
\(d_3=1\). Its two elementary circuits have roots \(\word{aab}\)
and \(\word{ab}\), and lengths \(3\) and \(2\), respectively
(Figure~\ref{fig:example}). Denote them by \(A,B\).
Reading \(W\) from position~1 gives successive returns to
\(\word{aba}\) at positions \(1,4,7,9,11\), corresponding to
traversals \(A,A,B,B\).
The first two traversals read \(U=\word{abaabaaba}\).
Its three factors of length~7 form \([\word{aab}]_7\)
(Figure~\ref{fig:overlapping-factors}), so \(C_{\word{aab}}(6)\)
is present in \(\Gamma_W(6)\).

\begin{figure}[htbp]
\centering
\begin{minipage}[c]{.49\linewidth}
\centering
\begin{tikzpicture}[x=.9cm,y=.9cm,
 vertex/.append style={font=\ttfamily\small,minimum width=11mm,minimum height=7mm},
 elabel/.append style={font=\ttfamily\footnotesize}]
 \node[vertex] (v) at (0,0) {aba};
 \node[vertex] (x) at (-2,1.1) {baa};
 \node[vertex] (y) at (-2,-1.1) {aab};
 \node[vertex] (z) at (2.1,0) {bab};
 \draw[edgea] (v)--node[elabel,above right]{abaa}(x);
 \draw[edgea] (x)--node[elabel,left]{baab}(y);
 \draw[edgea] (y)--node[elabel,below right]{aaba}(v);
 \draw[edgeb] (v) to[bend left=38] node[elabel,above]{abab}(z);
 \draw[edgeb] (z) to[bend left=38] node[elabel,below]{baba}(v);
\end{tikzpicture}\\[3pt]
\(\Gamma_W(3)\)
\end{minipage}\hfill
\begin{minipage}[c]{.49\linewidth}
\centering
\begin{tikzpicture}[x=.95cm,y=.9cm,
 vertex/.append style={font=\ttfamily\small,minimum height=7mm},
 elabel/.append style={font=\ttfamily\footnotesize}]
 \node[vertex] (a) at (0,1.0) {abaaba};
 \node[vertex] (b) at (1.9,-1.0) {baabaa};
 \node[vertex] (c) at (-1.9,-1.0) {aabaab};
 \draw[edgea] (a)--node[elabel,above right]{abaabaa}(b);
 \draw[edgea] (b)--node[elabel,below]{baabaab}(c);
 \draw[edgea] (c)--node[elabel,above left]{aabaaba}(a);
\end{tikzpicture}\\[3pt]
\(C_{\word{aab}}(6)\)
\end{minipage}
\caption{Left: \(\Gamma_W(3)\), with circuits of lengths
\(3\) and \(2\). Right: \(C_{\word{aab}}(6)\),
extracted from \(\Gamma_W(6)\). Its length remains \(3\).}
\label{fig:example}
\end{figure}

\begin{figure}[htbp]
\centering
\begin{tikzpicture}[x=.67cm,y=.8cm]
\foreach \c [count=\j from 1] in {a,b,a,a,b,a,a,b,a}{
 \node[font=\scriptsize] at (\j,3.2) {\j};
 \node[font=\ttfamily] at (\j,2.7) {\c};
}
\node at (-.15,2.7) {\(U\)};
\foreach \s in {1,2,3}{
 \pgfmathsetmacro{\y}{2.1-.72*\s}
 \pgfmathtruncatemacro{\e}{\s+6}
 \draw[fill=black!5,rounded corners=2pt] (\s-.45,\y-.28) rectangle (\e+.45,\y+.28);
 \node at (-.15,\y) {\(i=\s\)};
 \foreach \c [count=\j from 1] in {a,b,a,a,b,a,a,b,a}{
  \ifnum\j<\s\else\ifnum\j>\e\else
   \node[font=\ttfamily] at (\j,\y) {\c};
  \fi\fi
 }
}
\end{tikzpicture}
\caption{The length-\(7\) factors of \(U=\word{abaabaaba}\) starting at positions \(1,2,3\). The last ends at \(9=\ell+2p\).}\label{fig:overlapping-factors}
\end{figure}

\FloatBarrier
\subsection{The weighted sum of circuits}

Assign weight \(1\) to each \(C_z(\ell)\) with
\(z\in\Zset\) and \(p_z\le\ell\le M_z\), and put
\[
 \mathcal B=\{\ell:1\le\ell\le n-2,\ d_\ell=1,\ b_\ell=2\}.
\]
For each \(\ell\in\mathcal B\), choose one root \(f(\ell)\)
satisfying Lemma~\ref{lem:extension}, and reduce the weight of
\(C_{f(\ell)}(\ell)\) to \(1/2\).
If both roots satisfy the lemma, either may be chosen.
Let
\begin{equation}\label{eq:selections}
 T_z=|\{\ell\in\mathcal B:f(\ell)=z\}|,\qquad
 \sum_zT_z=|\mathcal B|.
\end{equation}
Thus \(T_z\) counts the circuits of root \(z\) whose weights are
actually reduced, and the remaining weight for this root is
\(K_z-T_z/2\). The next lemma shows that it still bounds \(s_z\).

\begin{lemma}\label{lem:reduction}
For every \(z\in\Zset\),
\begin{equation}\label{eq:reduction}
 K_z-\frac{T_z}{2}\ge s_z.
\end{equation}
\end{lemma}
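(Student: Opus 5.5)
The plan is to combine the two bounds of Corollary~\ref{cor:class} with an upper bound on \(T_z\) that comes directly from the range of factor lengths at which a reduction can be charged to \(z\). The point is that the lemma on circuits at larger factor lengths does more than guarantee a long circuit: it confines the set of \(\ell\) with \(f(\ell)=z\) to an initial segment of \([p_z,M_z]\) that is shorter than the full interval by \(p_z\).

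\emph{Step 1: bound \(T_z\).} By construction, if \(f(\ell)=z\), then \(z\) satisfies the conclusion of the lemma for this \(\ell\). So \(p_z\le\ell\) and \(\ell+p_z\le M_z\), that is,
\[
 \ell\in[p_z,\,M_z-p_z].
\]
Since the \(\ell\in\mathcal B\) are distinct integers, \(T_z\) is at most the number of integers in this interval. This gives \(T_z\le\max\{0,\,M_z-2p_z+1\}=\max\{0,\,K_z-p_z\}\), using \(K_z=M_z-p_z+1\).

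\emph{Step 2: case split.} If \(T_z=0\), the claim \(K_z\ge s_z\) is exactly the first inequality of Corollary~\ref{cor:class}. If \(T_z\ge1\), Step~1 gives \(T_z\le K_z-p_z\). We then use the second inequality \(K_z\ge2s_z-p_z\) of Corollary~\ref{cor:class}:
\[
 K_z-\frac{T_z}{2}\;\ge\;K_z-\frac{K_z-p_z}{2}\;=\;\frac{K_z+p_z}{2}\;\ge\;\frac{(2s_z-p_z)+p_z}{2}\;=\;s_z .
\]
This proves \eqref{eq:reduction}.

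The only step needing care is Step~1. The concern there is making sure that each reduced circuit \(C_z(\ell)\) really satisfies \(\ell+p_z\le M_z\) for the specific root \(z=f(\ell)\). This holds because \(f(\ell)\) is chosen among the roots satisfying the lemma, and \(M_z\) is by \eqref{eq:counts} the largest \(\ell'\) for which \(C_z(\ell')\) is present in \(\Gamma_W(\ell')\). No further interaction between different \(\ell\in\mathcal B\) is needed, because each \(\ell\) contributes at most one to \(T_z\). The rest is the arithmetic above.
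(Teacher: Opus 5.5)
Your proof is correct and follows the paper's argument: you confine each \(\ell\) with \(f(\ell)=z\) to \([p_z,M_z-p_z]\) to get \(T_z\le\max\{0,K_z-p_z\}\), and then combine this with the two inequalities of Corollary~\ref{cor:class}. The differences are cosmetic: you split on \(T_z=0\) versus \(T_z\ge1\) rather than on \(K_z\le p_z\) versus \(K_z>p_z\), and your chain \(K_z-T_z/2\ge(K_z+p_z)/2\ge s_z\) is the paper's \(T_z\le K_z-p_z\le2(K_z-s_z)\) rearranged.
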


\begin{proof}
If \(f(\ell)=z\), the circuit \(C_z(\ell)\) is present and
\(M_z\ge\ell+p_z\). Hence
\begin{equation}\label{eq:eligible}
 p_z\le\ell\le M_z-p_z.
\end{equation}
This interval contains \(\max\{0,M_z-2p_z+1\}=\max\{0,K_z-p_z\}\)
integers. Since \(T_z\) counts the distinct values of \(\ell\) for
which \(z\) is chosen, we have
\[
 T_z\le\max\{0,K_z-p_z\}.
\]
If \(K_z\le p_z\), then \(T_z=0\), and \(K_z\ge s_z\) by
Corollary~\ref{cor:class}.
If \(K_z>p_z\), Corollary~\ref{cor:class} gives
\(K_z\ge2s_z-p_z\), and hence
\[
 T_z\le K_z-p_z\le2(K_z-s_z).
\]
In both cases, the total reduction \(T_z/2\) is at most \(K_z-s_z\),
and the remaining weight is at least \(s_z\).
\end{proof}

For each \(\ell\in\mathcal B\), exactly one weight is reduced by
\(1/2\), so the total within \(\Gamma_W(\ell)\) becomes \(3/2\).
For \(\ell\notin\mathcal B\), the weight remains \(b_\ell\) and is
at most \(3d_\ell/2\), by Corollary~\ref{cor:graph} and the cases
discussed at the beginning of this section. Therefore
\begin{equation}\label{eq:total-weight}
 \sum_{\ell=1}^{n-2}b_\ell-\frac{|\mathcal B|}{2}
 \le\frac32\sum_{\ell=1}^{n-2}d_\ell.
\end{equation}

\Needspace{15\baselineskip}
\begin{proof}[Proof of Theorem~\ref{thm:main}]
For primitive \(w\) with \(n\ge2\), Lemma~\ref{lem:reduction},
equations~\eqref{eq:double-count} and~\eqref{eq:selections}, and
inequality~\eqref{eq:total-weight} give
\begin{align}
 \Sc=\sum_zs_z
 &\le\sum_z\left(K_z-\frac{T_z}{2}\right)\notag\\
 &=\sum_{\ell=1}^{n-2}b_\ell-\frac{|\mathcal B|}{2}\notag\\
 &\le\frac32\sum_{\ell=1}^{n-2}d_\ell
 =\frac32(n-\sigma),\label{eq:final}
\end{align}
where the last equality is Lemma~\ref{lem:basic}.
The nonprimitive case follows from Lemma~\ref{lem:nonprimitive}.
If \(n=1\), there are no nonempty squares.
Finally, since $S^\circ(w)$ is an integer, we obtain
$S^\circ(w)\le\lfloor 3n/2\rfloor$.
\end{proof}

\section*{Acknowledgments}
The author was supported by JST BOOST Grant Number JPMJBS2406.

\section*{Use of generative AI}
The author used ChatGPT and Codex (OpenAI) to assist with
developing proof strategies and mathematical arguments,
examining potential gaps and counterexamples, and drafting
and revising the manuscript, including its \LaTeX{} source
and figures.
The author takes full responsibility for the content of
this paper, including its proofs and references.

\Needspace{14\baselineskip}

\end{document}